\newtheorem*{thm}{Theorem}
\newtheorem*{lem}{Lemma}
\newtheorem*{pro}{Properties}
\newtheorem*{fact}{Fact}
\newtheorem{cor}{Corollary}
\newtheorem*{defi}{Definition}
\newcommand{\grad}{\,{\rm grad}}
\newcommand{\jj}{\,{\rm j}}
\newcommand{\Frac}{\,{\rm Frac}}
\newcommand{\gr}{\,{\rm gr}}
\newcommand{\Z}{\mathbb{Z}}
\newcommand{\N}{\mathbb{N}}
\title{A  parachute for the degree of a polynomial in algebraically independent ones}
\author{S. V\'en\'ereau}
\begin{document}

\maketitle

\begin{abstract}
  We give a  simpler proof as well as a generalization of the main result of an article of Shestakov and Umirbaev
  (\cite{SU}). This latter article being the first of two that solve a long-standing conjecture about the non-tameness,
  or "wildness",
of Nagata's automorphism. As corollaries we get interesting
informations about the leading terms of polynomials forming an
automorphism of $K[x_1,\cdots,x_n]$ and reprove the tameness of
automorphisms of $K[x_1,x_2]$.
\end{abstract}
The following notations are fixed throughout the article:  $K$ is a field of cha\-rac\-teristic 0 and $K[x_1,\cdots,x_n]$ is the ring of polynomials
 in the $n$ indeterminates $x_1,\cdots,x_n$ with coefficients in $K$,
endowed with the classical degree function: $\deg$. We consider $m$
algebraically independent polynomials in $K[x_1,\cdots,x_n]$: $f_1,
\cdots,f_m$ of respective degrees $d_1,\cdots,d_m$. There is also,
for every polynomial $G\in K[f_1,\cdots,f_m]$ a unique one $\mathcal
G(X_1,\cdots,X_m)\in K[X_1,\cdots,X_m]$, where $X_1,\cdots,X_m$  are
new indeterminates, such that $G=\mathcal G(f_1,\cdots,f_m)$. By
abuse of notation we will write $\frac{\partial G}{\partial f_i}$ to
denote $\frac{\partial \mathcal G}{\partial X_i}(f_1,\cdots,f_m)$,
$\forall 1\leq i\leq m$ and $\deg_{f_i}G$ to denote
$\deg_{X_i}\mathcal G$, the degree of $\mathcal G$ in $X_i$.

The following definition and properties are only   formally new, and come from \cite{SU}.
\begin{defi}
  We call the {\em parachute} of $  f_1, \cdots,f_m$ and denote $\nabla=\nabla(f_1,\cdots,f_m)$ the integer
  $$
  \nabla=d_1+\cdots+d_m-m-\max_{1\leq i_1,\cdots,i_m\leq n}\deg \jj_{x_{i_1},\cdots,x_{i_m}}(f_1,\cdots,f_m)
  $$
  where $\jj_{x_{i_1},\cdots,x_{i_m}}(f_1,\cdots,f_m)$ is the jacobian determinant of $f_1,\cdots,f_m$ with respect
  to $x_{i_1},\cdots,x_{i_m}$ that is $\jj_{x_{i_1},\cdots,x_{i_m}}(f_1,\cdots,f_m)=\det (\partial f_i/\partial
  x_{i_j})_{i,j}$.
\end{defi}

\begin{pro}
  The parachute of $f_1,\cdots,f_m$ has the following estimate:
  \begin{eqnarray}\label{est}&
  0\leq  \nabla=\nabla(f_1,\cdots,f_m)  \leq d_1+\cdots+ d_m-m\, .
  &\end{eqnarray}
  For any $G\in K[f_1,\cdots,f_m]$ and $\forall 1\leq i\leq m$, one
  has
  \begin{eqnarray}\label{para}&
    \begin{array}{rll}
      \deg G\geq  & \deg\frac{\partial G}{\partial f_i}+d_i-\nabla & \mbox{ and, inductively,}\\
      \deg G\geq  & \deg\frac{\partial^k G}{\partial f_i^k}+kd_i-k\nabla,& \forall k\geq 0.
    \end{array}
  &\end{eqnarray}
\end{pro}

\begin{proof}
  The left minoration  $0\leq\nabla$ in (\ref{est}) is an easy exercise
  . The right majoration is a direct
  consequence of the following
  \begin{fact}
    The vectors $\grad f_1=(\partial f_1/\partial x_1,\cdots,\partial f_1/\partial x_n), \cdots,\grad f_m=
    (\partial f_m/\partial x_1,\cdots,\partial f_m/\partial x_n)$ are linearly independent over $K[x_1,\cdots,x_n]$
    therefore the minors of order $m$ of the matrix $\left(\begin{array}{c}\grad f_1 \\ \vdots\\ \grad
    f_m\end{array}\right)$ are not all 0 and the number $\max_{1\leq i_1,\cdots,i_m\leq n}\deg
    \jj_{x_{i_1},\cdots,x_{i_m}}(f_1,\cdots,f_m)$ is non-negative.
  \end{fact}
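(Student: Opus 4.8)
The assertion has a formal half and a substantive half. The formal half — that the order‑$m$ minors are not all zero and that $\max_{1\le i_1,\dots,i_m\le n}\deg\jj_{x_{i_1},\dots,x_{i_m}}(f_1,\dots,f_m)\ge 0$ — I would deduce immediately from the first half: linear independence of $\grad f_1,\dots,\grad f_m$ over the domain $K[x_1,\dots,x_n]$ is, after clearing denominators, the same as linear independence over its fraction field $F=K(x_1,\dots,x_n)$; hence the $m\times n$ matrix with rows $\grad f_1,\dots,\grad f_m$ has rank $m$ over $F$, so one of its $m\times m$ minors — which are exactly the Jacobian determinants $\jj_{x_{i_1},\dots,x_{i_m}}(f_1,\dots,f_m)$ — is a nonzero polynomial, and a nonzero polynomial has degree $\ge 0$. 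So the real content is the linear independence of the gradients, i.e. the Jacobian criterion for algebraic independence in characteristic $0$.

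The plan is to prove the contrapositive by induction on $m$: if $\grad f_1,\dots,\grad f_m$ are linearly dependent over $F$, then $f_1,\dots,f_m$ are algebraically dependent over $K$. For $m=1$, linear dependence means $\grad f_1=0$, so every $\partial f_1/\partial x_j$ vanishes, and in characteristic $0$ this forces $f_1\in K$ — this is the one place where the characteristic‑zero hypothesis is genuinely used. For the inductive step I would assume the statement for $m-1$: if $\grad f_1,\dots,\grad f_{m-1}$ are already dependent we are done by the induction hypothesis, so I may assume they are independent, and then, after renumbering the variables, the minor $\Delta:=\det(\partial f_i/\partial x_j)_{1\le i,j\le m-1}$ is nonzero and the dependence relation reads $\grad f_m=\sum_{i=1}^{m-1}a_i\,\grad f_i$ with $a_i\in F$.

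The crux is to turn this purely differential relation into an honest polynomial identity among the $f_i$. First I would check that $f_1,\dots,f_{m-1},x_m,\dots,x_n$ is a transcendence basis of $F$ over $K$: a nonzero polynomial relation among them of minimal degree, differentiated with respect to each $x_j$ and then simplified using the invertibility of $(\partial f_i/\partial x_j)_{1\le i,j\le m-1}$ (with $\det=\Delta\ne0$), would have all its first partials vanishing at the substitution point, hence — by minimality and characteristic $0$ — vanishing identically, so the relation would be constant, hence zero. Therefore $f_m$ is algebraic over $M:=K(f_1,\dots,f_{m-1},x_m,\dots,x_n)$, so there is an irreducible $S\in K[Y_1,\dots,Y_{m-1},Z_m,\dots,Z_n,T]$ with $S(f_1,\dots,f_{m-1},x_m,\dots,x_n,f_m)=0$. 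Differentiating this identity with respect to $x_1,\dots,x_{m-1}$ and invoking $\grad f_m=\sum a_i\grad f_i$ together with $\Delta\ne0$ pins down $\partial S/\partial Y_i$ at the substitution point in terms of $\partial S/\partial T$; feeding that back into the derivatives with respect to $x_m,\dots,x_n$ forces $\partial S/\partial Z_l$ to vanish there for every auxiliary variable $Z_l$. Since the first $n$ of the elements $f_1,\dots,f_{m-1},x_m,\dots,x_n,f_m$ are algebraically independent, the ideal of all polynomial relations among this $(n+1)$‑tuple is a prime of height $1$, hence principal and generated by the irreducible $S$; as $\deg_{Z_l}(\partial S/\partial Z_l)<\deg_{Z_l}S$, divisibility forces $\partial S/\partial Z_l\equiv0$, i.e. $S$ involves none of the $Z_l$. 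But then $S(f_1,\dots,f_{m-1},f_m)=0$ is a nonzero algebraic relation among $f_1,\dots,f_m$, contradicting their algebraic independence.

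I expect the real obstacle to be exactly this last step: upgrading the ``infinitesimal'' dependence of $\grad f_m$ on $\grad f_1,\dots,\grad f_{m-1}$ over $F$ to a polynomial relation in the $f_i$ alone, i.e. eliminating the auxiliary variables $x_m,\dots,x_n$. A heavier but more conceptual alternative would be to argue through K\"ahler differentials: in characteristic $0$ every transcendence basis is separating, so $\Omega_{F/K}$ is free of rank $n$ on $dx_1,\dots,dx_n$, algebraically independent elements have $F$‑linearly independent differentials, and the coordinate vector of $df_i$ in this basis is precisely $\grad f_i$ — which yields the linear independence directly; one may of course also simply cite the classical Jacobian criterion. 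Everything else — the base case, the transcendence‑basis verification, and the formal half — is routine once we are in characteristic $0$.
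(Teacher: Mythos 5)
Your argument is correct, but it takes a genuinely different route from the paper for the substantive part. The paper does not reprove the Jacobian criterion at all: it quotes the classical fact (with references) that $n$ rational functions in $K(x_1,\cdots,x_n)$ are algebraically independent iff their $n\times n$ Jacobian determinant is nonzero, completes $f_1,\cdots,f_m$ to $n$ algebraically independent rational functions, and reads off the linear independence of $\grad f_1,\cdots,\grad f_m$ from the nonvanishing of the full determinant; the statement about the order-$m$ minors and the non-negativity of $\max\deg\jj_{x_{i_1},\cdots,x_{i_m}}$ is then exactly the routine rank argument you give in your ``formal half.'' You instead prove the hard implication (algebraic independence $\Rightarrow$ independence of gradients) from scratch, by induction on $m$: the base case isolates where characteristic $0$ enters, the auxiliary transcendence basis $f_1,\cdots,f_{m-1},x_m,\cdots,x_n$ is verified by the minimal-degree-relation trick, and the elimination of the variables $x_m,\cdots,x_n$ via the principality of the height-one prime of relations (or, equivalently, Gauss's lemma applied to a primitive minimal polynomial of $f_m$) is sound — the claimed vanishings $\partial S/\partial Y_i+a_i\,\partial S/\partial T=0$ and $\partial S/\partial Z_l=0$ at the substitution point do follow from $\Delta\neq 0$ and the dependence relation, and the degree comparison then kills $\partial S/\partial Z_l$ identically. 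What your route buys is a self-contained, citation-free proof valid directly for $m\leq n$ functions, at the cost of roughly a page of commutative algebra; the paper's route buys brevity by outsourcing the criterion and using the completion (exchange) argument, which is the standard shortcut. Your closing alternatives — Kähler differentials, or simply citing the criterion — would reproduce the paper's economy; as written, only the slightly telegraphic phrasing of the elimination step would need to be expanded into the explicit computations indicated above.
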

  \begin{proof}
  As mentioned in \cite{SU}, it is a well-known fact (see e.g. \cite{ML} or \cite{Yu} for a nice proof) that $n$ rational functions $f_1,\cdots,f_n\in K(x_1,\cdots,x_n)$ are algebraically independant if and only if their jacobian determinant is not zero. Our fact is then proved by completing our $m$ algebraically independant polynomials to get $n$ algebraically independant rational functions: the jacobian determinant is not zero and it follows that $\grad f_1,\cdots,\grad f_m$ must be linearly independant.
\end{proof}
 It is clearly sufficient to show (\ref{para})  for $i=m$. \\
 Take any $m$ integers $1\leq i_1,\cdots,i_m\leq n$. From the definition of $\jj_{x_{i_1},\cdots,x_{i_m}}$ it is
 clear that
 \begin{eqnarray*}
   \deg \jj_{x_{i_1},\cdots,x_{i_m}}(f_1,\cdots,f_{m-1},G) & \leq & d_1-1+\cdots+d_{m-1}-1+\deg G-1\\
                                                           & \leq& d_1+\cdots+d_{m-1}-m+\deg G\,  .
 \end{eqnarray*}
  On the other hand the chain rule gives
  $$
    \jj_{x_{i_1},\cdots,x_{i_m}}(f_1,\cdots,f_{m-1},G)=
    \jj_{x_{i_1},\cdots,x_{i_m}}(f_1,\cdots,f_{m-1},f_m)\frac{\partial G}{\partial f_m}.
  $$
  Hence we get
  \begin{eqnarray*}
    & \deg \jj_{x_{i_1},\cdots,x_{i_m}}(f_1,\cdots,f_{m-1},f_m)+\deg\frac{\partial G}{\partial f_m} \leq
    d_1+\cdots+d_{m-1}-m+\deg G &\\
    &\deg\frac{\partial G}{\partial f_m}+d_m-(d_1+\cdots+d_{m-1}+d_m-m)+\deg
    \jj_{x_{i_1},\cdots,x_{i_m}}(f_1,\cdots,f_m)\leq \deg G\; . &
  \end{eqnarray*}
  In particular, when the maximum is realized,
  $$
    \deg\frac{\partial G}{\partial f_m}+d_m-(d_1+\cdots+d_{m-1}+d_m-m)+\max_{1\leq i_1,\cdots,i_m\leq n}\deg
    \jj_{x_{i_1},\cdots,x_{i_m}}(f_1,\cdots,f_m)
    \leq \deg G
  $$
  $$
    \deg\frac{\partial G}{\partial f_m}+d_m-(d_1+\cdots+d_{m-1}+d_m-m-\max_{1\leq i_1,\cdots,i_m\leq}\deg
    \jj_{x_{i_1},\cdots,x_{i_m}}(f_1,\cdots,f_m))\leq \deg G
  $$
  $$
    \deg\frac{\partial G}{\partial f_m}+d_m-\nabla\leq \deg G.
  $$
\end{proof}

In order to state our main theorem one needs to fix some more notations: we denote $\bar p$ the leading term of a polynomial $p\in K[x_1,\cdots,x_n]$ and for any subalgebra $A \subset K[x_1,\cdots,x_n]$, we denote $\gr(A):=K[\bar A]$ the subalgebra generated by $\bar A=\{\bar a|a\in A\}$. We define
$s_i$, $\forall 1\leq i\leq m$,  as the degree of the minimal, if
any, polynomial of $\bar f_i$ over $\Frac(\gr({K[f_j]}_{j\neq i}))$,
the field of fractions of the subalgebra generated by $\overline{K[f_j]}_{j\neq
i}=\overline{K[f_1,\cdots,f_{i-1},f_{i+1},\cdots,f_m]}$ and as
$+\infty$ otherwise. We denote $\lfloor\alpha\rfloor$ the integral part of a real number $\alpha$ and agree that $k/\infty=0$ when $0\leq k<\infty$.

\begin{thm}\label{main}
  Let $G$ be a polynomial in $K[f_1,\cdots,f_m]$. Then  the following minoration holds, $\forall
  1\leq i\leq m$,
  $$
    \deg G\geq d_i\cdot\deg_{f_i}G -\nabla\cdot\lfloor\frac{\deg_{f_i}G}{s_i}\rfloor.
  $$
\end{thm}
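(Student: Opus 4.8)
The plan is to induct on $k := \deg_{f_i}G$, using the Properties box (inequality (\ref{para})) as the engine and the number $s_i$ to control \emph{when} the parachute term actually has to be paid. Fix $i$; without loss of generality take $i=m$ to lighten notation, and write $s=s_m$, $d=d_m$, $\nabla=\nabla(f_1,\dots,f_m)$. For $k=0$ the claim reads $\deg G\ge 0$, which is trivial. For the inductive step, the naive move is: apply (\ref{para}) once to get $\deg G\ge \deg\frac{\partial G}{\partial f_m}+d-\nabla$ and then apply the inductive hypothesis to $\frac{\partial G}{\partial f_m}$, whose $f_m$-degree is $k-1$. This yields $\deg G\ge d\cdot k-\nabla\cdot(1+\lfloor (k-1)/s\rfloor)$, which is off from the target by roughly one factor of $\nabla$ every time $s>1$. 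So the real content is to show that we only need to ``spend'' a $\nabla$ once per block of $s$ consecutive derivatives, i.e. that after differentiating $s$ times in $f_m$ the degree drops by at least $sd-\nabla$ rather than the weaker $sd-s\nabla$.

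The key step, therefore, is the following sharpened form of (\ref{para}): for every $G\in K[f_1,\dots,f_m]$,
$$
  \deg\frac{\partial^{\,s}G}{\partial f_m^{\,s}}\ \le\ \deg G-\bigl(sd-\nabla\bigr),
$$
and more generally $\deg\frac{\partial^{\,k}G}{\partial f_m^{\,k}}\le \deg G-kd+\nabla\cdot\lceil k/s\rceil$ — note this last statement is equivalent to the theorem once one checks the arithmetic $k d-\nabla\lceil k/s\rceil$ matches $d\cdot\deg_{f_m}(\partial^{k}G/\partial f_m^{k})$-bookkeeping; in fact the theorem follows by applying it with $k=\deg_{f_m}G$ so that $\partial^{k}G/\partial f_m^{k}\in K[f_1,\dots,f_{m-1}]$ has degree $\ge 0$. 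To prove the sharpened inequality I would look at leading terms: write $\mathcal G(X_1,\dots,X_m)=\sum_j c_j(X_1,\dots,X_{m-1})X_m^{\,j}$ and consider the ``top in $x$'' part. The definition of $s$ says $\bar f_m$ is algebraic of degree $s$ over $L:=\Frac(\gr(K[f_1,\dots,f_{m-1}]))$, equivalently $\bar f_m^{\,s}\in L+L\bar f_m+\dots+L\bar f_m^{\,s-1}$ but no lower relation holds. The point is that when one differentiates fewer than $s$ times the leading terms cannot cancel (a cancellation would produce, after dividing by the appropriate power of $\bar f_m$, a nontrivial algebraic relation of degree $<s$ for $\bar f_m$ over $L$ among the $\overline{\partial^{j}G/\partial f_m^{j}}$), so the first $s-1$ derivatives each cost exactly $d$ in degree and no $\nabla$; only at the $s$-th derivative can a leading-term collapse occur, and that is precisely where (\ref{para}) is invoked to absorb one $\nabla$.

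Concretely the induction runs as follows. Given $G$ with $\deg_{f_m}G=k$, set $r$ with $0<r\le s$ and $k\equiv r\pmod s$ in the sense that $\lceil k/s\rceil=\lceil (k-r)/s\rceil+1$; apply the ``no-cancellation for $<s$ steps'' lemma $r-1$ times and (\ref{para}) once to pass from $G$ to $H:=\partial^{\,r}G/\partial f_m^{\,r}$, obtaining $\deg G\ge \deg H+rd-\nabla$, with $\deg_{f_m}H=k-r$ and $\lceil (k-r)/s\rceil=\lceil k/s\rceil-1$; then invoke the inductive hypothesis on $H$ and add. The arithmetic $rd-\nabla+\bigl(d(k-r)-\nabla\lceil (k-r)/s\rceil\bigr)=dk-\nabla\lceil k/s\rceil$ closes the loop, and translating $\lceil k/s\rceil$ back to $\lfloor k/s\rfloor$ for the statement as written (when $s\nmid k$ they differ in a way absorbed because the final $H$ still has degree $\ge 0$, and when $s\mid k$ one gets $\lfloor k/s\rfloor$ directly from $H\in K[f_1,\dots,f_{m-1}]$) gives the theorem; the convention $k/\infty=0$ handles the case $s_i=+\infty$, where the naive one-step-per-derivative argument with \emph{no} $\nabla$ ever spent is exactly right. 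The main obstacle is the leading-term cancellation lemma: making precise that a premature collapse of $\overline{\partial^{j}G/\partial f_m^{j}}$ forces an algebraic dependence of $\bar f_m$ of degree $<s$ over $L$, which requires carefully tracking how leading terms of $\mathcal G$ and of its $X_m$-derivatives interact with $\gr(K[f_1,\dots,f_{m-1}])$ — this is the heart of the Shestakov–Umirbaev estimate and the place where $\gr$ (rather than $K[\,\cdot\,]$ itself) is essential.
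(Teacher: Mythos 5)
Your plan hinges on the ``no-cancellation for fewer than $s$ steps'' lemma: that a leading-term collapse can occur at most once per block of $s$ consecutive $f_m$-derivatives, so that $\deg\frac{\partial^{s}G}{\partial f_m^{s}}\le\deg G-(sd_m-\nabla)$, and more generally $\deg\frac{\partial^{k}G}{\partial f_m^{k}}\le\deg G-kd_m+\nabla\lceil k/s\rceil$. This lemma is false, and so is the sharpened inequality. Write $G=\sum g_if_m^{i}$ with $g_i\in K[f_1,\dots,f_{m-1}]$ and let $h(X)=\sum_{\deg g_i+id_m=\max}\bar g_iX^{i}$; a collapse of $\widehat{\partial^{j}G/\partial f_m^{j}}$ only means $h^{(j)}$ lies in the ideal $(p)$ of the minimal polynomial $p$ of $\bar f_m$, which is a relation of degree \emph{at least} $s$, not a relation of degree $<s$, so minimality of $p$ is never contradicted and nothing prevents collapses at consecutive steps: whenever $p^{2}$ divides $h$, both $h$ and $h'$ lie in $(p)$, i.e.\ $G$ and $\partial G/\partial f_m$ collapse simultaneously. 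Concretely take $n=m=2$, $f_1=x^{2}$, $f_2=x^{3}+y$, so $d_1=2$, $d_2=3$, $\nabla=2$, and $s_2=2$ (minimal polynomial $X^{2}-\bar f_1^{\,3}$ of $\bar f_2=x^{3}$ over $K(x^{2})$), and let $G=(f_2^{2}-f_1^{3})^{2}=(2x^{3}y+y^{2})^{2}$, for which $h=p^{2}$. Then $\deg G=8$, $\deg\frac{\partial G}{\partial f_2}=\deg\bigl(4f_2(f_2^{2}-f_1^{3})\bigr)=7$, $\deg\frac{\partial^{2}G}{\partial f_2^{2}}=\deg\bigl(8x^{6}+24x^{3}y+12y^{2}\bigr)=6$: collapses happen at both the zeroth and the first step, each differentiation lowers the degree by only $d_2-\nabla=1$, and your block inequality would require $8\ge 6+2d_2-\nabla=10$. (The theorem itself is of course consistent: $3\cdot 4-2\lfloor 4/2\rfloor=8$.)

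The role of $s$ is not to space the collapses out (they can be bunched at the start), but to bound their total number: take $k$ maximal with $h\in(p^{k})$; since $\mathrm{char}\,K=0$ and $p$ is irreducible, the multiplicity drops by one at each differentiation, so $h^{(k)}\notin(p)$, hence after exactly $k$ derivatives there is no collapse and $\deg\frac{\partial^{k}G}{\partial f_m^{k}}\ge d_m(\deg_{f_m}G-k)$, while $p^{k}\mid h$ forces $ks\le\deg_Xh\le\deg_{f_m}G$, i.e.\ $k\le\lfloor\deg_{f_m}G/s\rfloor$. One then applies (\ref{para}) $k$ times, paying $k\nabla$ possibly at $k$ consecutive steps -- exactly the situation your plan rules out -- and gets $\deg G\ge d_m\deg_{f_m}G-k\nabla$. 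So the repair is to replace your per-block lemma by this multiplicity count; with it no induction on $\deg_{f_m}G$ is needed, and the part of your write-up that survives (the identification $\widehat{\partial^{j}G/\partial f_m^{j}}=h^{(j)}(\bar f_m)$ while $h^{(j)}\neq0$, and the trivial case $s_m=\infty$) is precisely what the argument uses.
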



\begin{proof}
  It is of course sufficient to prove  it  for $i=m$.
  First remark that a polynomial
  $G=\sum g_i f_m^i\in A[f_m]$, where $g_i\in A:=K[f_1,\cdots,f_{m-1}]$, has degree strictly
  smaller than $\max_i \deg g_i+i\cdot d_m$ if and only if
  $$
    \widehat{G}:=\sum_{\deg g_i+i\cdot d_m=\max} \bar g_i \bar f_m^i=0
  $$
  so if $s_m=+\infty$, which means such an annihilation cannot
  occur, then the minoration in the Theorem is clear. Let's assume now
  that $\bar f_m$ does have a minimal polynomial $p(\bar f_m)=0$
  with
  $ p=p(X)\in F[X]$ where $F$ is the field of fractions of $\gr(A)$ and $X$ a new indeterminate (whence $s_m:=\deg_Xp$). The
  following easy lemma constitutes  the very improvement with respect to \cite{SU}: it simplifies the proof a lot, makes it more general and even  stronger in the sense that one does not need the estimate (\ref{est}) anymore.
  \begin{lem}\label{l1}
    Let $G=\sum g_if_m^i$ be in $A[f_m]$ and
    $$
      h(X):=\sum_{\deg g_i+i\cdot d_m=\max} \bar g_i  X^i\in \gr(A)[X]\;\;\; (\mbox{ hence }\widehat{G}=h(\bar f_m)).
    $$ If
    $\deg G<\deg_{f_m}G\cdot d_m$ then $\widehat{G}=0$ or,
    equivalently, $h(X)\in (p(X)):=p(X)\cdot F[X]$.\\
    Moreover if $h'(X)\neq 0$, where $h'$ is the derivative of $h$, then  $\widehat{\frac{\partial G}{\partial
    f_m}}=h'(\bar f_m)$
    and more generally, while  $h^{(k)}\neq
    0$, one has
    $\widehat{\frac{\partial^k G}{\partial f_m^k}}=h^{(k)}(\bar f_m)$.
  \end{lem}
  \begin{proof}
    If $\deg G<\deg_{f_m}G\cdot d_m$ then $\deg G<\max_i \deg g_i+i\cdot d_m$ and $\widehat G=0$ as already remarked
    above.\\
    Assume that $h'\neq 0$. One has
    $$
      \begin{array}{rlcl}
        \widehat G &=\sum_{i\in I} \bar g_i \bar f_m^i=h(\bar f_m) &\mbox{ where } & I:=\{i|\deg g_i+i\cdot d_m\geq \deg
        g_j+j\cdot d_m\,\forall j\}\mbox{ and}\\
        \widehat{\frac{\partial G}{\partial f_m}} & =\sum_{i\in I'} i\bar g_i \bar f_m^{i-1}&\mbox{ where }&
         I':=\{i|\deg ig_i+(i-1)\cdot d_m\geq \deg jg_j+(j-1)\cdot d_m\,\forall j\}.
       \end{array}
    $$
    It remains to notice  that $I'=I\cap \N^*$ when this intersection is not empty, which occurs exactly when $h'\neq
    0$.
  \end{proof}
  Let now $k$ be the maximal number such that $h(X)\in (p(X)^k)$. Clearly
  $\deg_{f_m}G\geq\deg h\geq k\cdot \deg p=ks_m$ hence $k\leq\lfloor\frac{\deg_{f_m}G}{s_m}\rfloor$. One has
  $h^{(k)}\notin (p(X))$ hence, by the Lemma,
  $$
    \deg\frac{\partial^k G}{\partial f_m^k}\geq d_m\cdot\deg_{f_m}\frac{\partial^k G}{\partial
    f_m^k}=d_m\cdot(\deg_{f_m}G-k)
  $$
  and, by property (\ref{para}),
  $$
    \deg G\geq d_m\cdot(\deg_{f_m}G-k)+k\cdot d_m-k\cdot \nabla=d_m\cdot\deg_{f_m}G-k\cdot \nabla.
  $$
\end{proof}
A straightforward computation gives the following
\begin{cor}\label{cN}
  Define, $\forall i=1,\cdots,m$, $N_i=N_i(f_1,\cdots,f_m):=s_id_i-\nabla$. Let $G$ be a polynomial in $K[f_1,\cdots,f_m]$
  and, $\forall i=1,\cdots,m$, let $\deg_{f_i}G=q_is_i+r_i$ be the
  euclidean division of $\deg_{f_i}G$ by $s_i$. Then the following
  minoration holds
  $$
     \deg G\geq q_i\cdot N_i +r_id_i.
  $$
\end{cor}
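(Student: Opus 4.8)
The plan is to obtain the Corollary by a pure substitution into the inequality of Theorem \ref{main}, with no new idea required. As the author indicates, it is a straightforward computation, so the work is just careful bookkeeping of the euclidean division and of the conventions on $s_i$.

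Concretely, I would proceed as follows. Fix $i$ and write the euclidean division $\deg_{f_i}G=q_is_i+r_i$ with $0\leq r_i<s_i$; by the very definition of the integral part this means $\lfloor\deg_{f_i}G/s_i\rfloor=q_i$, which is the only occurrence of the floor in the statement of Theorem \ref{main}. Substituting, the theorem reads
\[
\deg G\ \geq\ d_i\cdot\deg_{f_i}G-\nabla\cdot q_i\ =\ d_i\,(q_is_i+r_i)-\nabla\, q_i .
\]
Now I would simply collect the terms carrying $q_i$: since $d_i q_i s_i-\nabla q_i=q_i(s_id_i-\nabla)=q_iN_i$ by the definition $N_i=s_id_i-\nabla$, the right-hand side equals $q_iN_i+r_id_i$, which is exactly the asserted minoration $\deg G\geq q_i\cdot N_i+r_id_i$.

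The one place that deserves a word of care is the case $s_i=+\infty$, i.e. when $\bar f_i$ has no minimal polynomial over $\Frac(\gr(K[f_j]_{j\neq i}))$. With the conventions fixed before Theorem \ref{main} one has $\lfloor\deg_{f_i}G/s_i\rfloor=0$, so the euclidean division is to be read with $q_i=0$ and $r_i=\deg_{f_i}G$; then both the hypothesis of Theorem \ref{main} and the conclusion of the Corollary collapse to $\deg G\geq d_i\cdot\deg_{f_i}G$, and there is nothing to prove. Thus there is no genuine obstacle: the Corollary is just a rewriting of Theorem \ref{main} in which the nonlinear term $\nabla\lfloor\deg_{f_i}G/s_i\rfloor$ is absorbed, together with part of $d_i\deg_{f_i}G$, into the single multiple $q_iN_i$ of the ``slope'' $N_i$. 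The ``hard part'', if any, is merely to check that this regrouping and the $s_i=\infty$ convention are mutually consistent, which the displayed identity above already does.
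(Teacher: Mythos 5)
Your proof is correct and is exactly the ``straightforward computation'' the paper alludes to (the paper gives no separate proof of Corollary \ref{cN}): substituting the euclidean division into Theorem \ref{main}, identifying $\lfloor\deg_{f_i}G/s_i\rfloor=q_i$, and regrouping $d_iq_is_i-\nabla q_i=q_iN_i$. Your remark on the $s_i=+\infty$ convention is a sensible extra check and matches the conventions fixed before the theorem.
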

The special case $m=2$ corresponds to the main result of \cite{SU}
(where $s_1,s_2$ are easy to compute):
\begin{cor}\label{cSU}
  If $m=2$, $\sigma_i:=\frac{d_j}{\gcd(d_1,d_2)}$ with $(i,j)=(1,2)$ and $(2,1)$ and $N:=\sigma_1d_1-\nabla=
  \sigma_2d_2-\nabla$
   then the following minoration holds, for $i=1,\,2$,
  $$
    \deg G\geq q_i\cdot N +r_id_i
  $$
  where $\deg_{f_i}G=q_is_i+r_i$.
\end{cor}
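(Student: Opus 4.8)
The plan is to deduce this from Corollary~\ref{cN}, the only extra input being an explicit computation of $s_1$ and $s_2$ in the case $m=2$. To start, for the single index $j\neq i$ one has $\gr(K[f_j])=K[\bar f_j]$: every $g=\sum_k a_kf_j^k\in K[f_j]$ has summands $a_kf_j^k$ of pairwise distinct degrees $kd_j$, so $\bar g$ is a scalar times a power of $\bar f_j$; hence $\Frac(\gr(K[f_j]))=K(\bar f_j)$, and this is a rational function field, $\bar f_j$ being a non-constant homogeneous polynomial and so transcendental over $K$. If $\bar f_1$ and $\bar f_2$ are algebraically independent over $K$, then $s_1=s_2=+\infty$, so in each euclidean division $q_i=0$ and $r_i=\deg_{f_i}G$, and Corollary~\ref{cN} already gives $\deg G\geq r_id_i=d_i\deg_{f_i}G$, which is the asserted bound. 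So assume $\bar f_1$ and $\bar f_2$ algebraically dependent; I claim $s_i=\sigma_i$ for $i=1,2$. Granting this and using $\sigma_1d_1=\sigma_2d_2=\mathrm{lcm}(d_1,d_2)$, one gets $N_i=s_id_i-\nabla=\sigma_id_i-\nabla=N$ for both $i$, the euclidean divisions here and in Corollary~\ref{cN} coincide, and the statement is exactly that corollary.

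To prove $s_1=\sigma_1$ (then $s_2=\sigma_2$ by symmetry), I would start from any nonzero $P_0(X_1,X_2)$ with $P_0(\bar f_1,\bar f_2)=0$ and isolate a nonzero component $P$ of $P_0$ homogeneous for the weights $(\deg X_1,\deg X_2)=(d_1,d_2)$ — legitimate because the images under $X_i\mapsto\bar f_i$ of the distinct-weight components of $P_0$ lie in distinct graded pieces of $K[x_1,\dots,x_n]$, so each vanishes separately. In $P$ all exponent pairs $(a,b)$ give the same value of $ad_1+bd_2$, hence differ by integer multiples of $(\sigma_1,-\sigma_2)$, where $\gcd(\sigma_1,\sigma_2)=1$; factoring out the common monomial and a power of $\bar f_2$, the relation $P(\bar f_1,\bar f_2)=0$ becomes $\tilde q(\bar f_1^{\sigma_1}/\bar f_2^{\sigma_2})=0$ for a nonzero $\tilde q\in K[Y]$. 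Thus $u:=\bar f_1^{\sigma_1}/\bar f_2^{\sigma_2}$ is algebraic over $K$; as it lies in $K(x_1,\dots,x_n)$, in which $K$ is algebraically closed, $u\in K^{*}$, i.e. $\bar f_1^{\sigma_1}=u\,\bar f_2^{\sigma_2}$. In particular $\bar f_1$ is a root of $X^{\sigma_1}-u\bar f_2^{\sigma_2}\in K(\bar f_2)[X]$, so $s_1\leq\sigma_1$.

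For the reverse inequality I would write the minimal polynomial of $\bar f_1$ over $K(\bar f_2)$ as $\sum_{i=0}^{s_1}c_iX^i$ with $c_{s_1}\neq0$ and, after clearing denominators, read $\sum_{i=0}^{s_1}c_i\bar f_1^{i}=0$ as a nontrivial vanishing sum in $K[x_1,\dots,x_n]$ in which every nonzero $c_i$, being a polynomial in $\bar f_2$, has degree divisible by $d_2$. Since $\deg$ is additive, $\deg(c_i\bar f_1^{i})\equiv id_1\pmod{d_2}$, and as $\sigma_1$ is the order of $d_1$ in $\Z/d_2\Z$, these residues are pairwise distinct for $0\leq i\leq s_1$ as soon as $s_1<\sigma_1$; the nonzero terms would then have pairwise distinct degrees and their sum could not be $0$. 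Hence $s_1\geq\sigma_1$, so $s_1=\sigma_1$.

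The heart of the matter is this computation of the $s_i$ in the dependent case — extracting the clean relation $\bar f_1^{\sigma_1}=u\bar f_2^{\sigma_2}$ from an arbitrary algebraic dependence, where weighted homogeneity and the relative algebraic closedness of $K$ in $K(x_1,\dots,x_n)$ enter, and then securing the matching lower bound on the degree of the field extension, for which the impossibility of cancelling the unique top-degree term of a vanishing sum of polynomials suffices. Everything else is bookkeeping on top of Corollary~\ref{cN}.
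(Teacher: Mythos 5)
Your proof is correct and follows essentially the paper's route: everything is reduced to Corollary~\ref{cN}, and the only real content is the comparison of $s_i$ with $\sigma_i$, which you obtain (as the paper does for $s_2\geq\sigma_2$) from the degree congruence $id_1\equiv jd_1 \pmod{d_2}$ forced by a vanishing sum whose coefficients are polynomials in $\bar f_2$; your formulation is just the contrapositive of the paper's ``two terms must share the top degree'' argument. The one difference is that you prove more than is needed: the paper only establishes $s_i\geq\sigma_i$, which already suffices because then $N_i=s_id_i-\nabla\geq N$ and the euclidean division in the statement is by $s_i$ in both corollaries, so no exact value of $s_i$ is required. The reverse inequality $s_i\leq\sigma_i$ (hence equality) is precisely what the paper relegates to a footnote as provable easily and without Zaks Lemma, and your argument for it --- isolating a weighted-homogeneous component of a dependence relation, deducing $\bar f_1^{\sigma_1}=u\,\bar f_2^{\sigma_2}$ with $u\in K^{*}$ from the relative algebraic closedness of $K$ in $K(x_1,\dots,x_n)$ --- is a sound way to supply that footnote, just not necessary for the corollary. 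Your treatment of the case of algebraically independent leading terms ($s_i=+\infty$, $q_i=0$) is also consistent with the paper's convention $k/\infty=0$.
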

\begin{proof} Let us prove it for $i=2$. By corollary \ref{cN} it suffices to prove that $s_2\geq\sigma_2$ \footnote{Actually equality holds, as proved in \cite{SU}  using Zaks Lemma, it is however possible to show it easily and without this result.}: $s_2$ is the degree of the minimal polynomial of $f_2$ over $\Frac(\gr(K[f_1]))=\Frac(K[\bar f_1])=K(\bar f_1)$:
\begin{eqnarray}\label{s2}
  &p(\bar f_2)=\bar f_2^{s_2}+p_{s_2-1}(\bar f_1)\bar f_2^{s_2-1}+\cdots+p_1(\bar f_1)\bar f_2+p_0(\bar
  f_1)=0&
\end{eqnarray}
hence $\exists 0\leq i\neq j\leq s_2$ such that $\deg p_i(\bar f_1)\bar f_2^i=\deg p_j(\bar f_1)\bar f_2^j$. It follows that $i\cdot d_2\equiv jd_2 \mod d_1$ whence $d_1\mid (i-j)d_2$
 and $i-j\in\Z^*\frac{d_1}{\gcd(d_1,d_2)}$ which gives $s_2\geq |i-j|\geq \frac{d_1}{\gcd(d_1,d_2)}=\sigma_2$.
\end{proof}
\begin{cor}\label{dG1}
Let $G$ be a polynomial in $K[f_1,\cdots,f_m]$ such that $\deg G=1$.
Then, $\forall i=1,\cdots,m$, $\deg_{f_i}G=0$ or $d_i=1$ or
$N_i=s_id_i-\nabla\leq 1$.
\end{cor}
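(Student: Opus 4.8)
The plan is to apply Corollary~\ref{cN} directly to the hypothesis $\deg G = 1$ and extract a numerical constraint. Fix $i \in \{1,\dots,m\}$ and write the euclidean division $\deg_{f_i}G = q_i s_i + r_i$ with $0 \le r_i < s_i$. Corollary~\ref{cN} gives
$$
1 = \deg G \geq q_i N_i + r_i d_i,
$$
where $N_i = s_i d_i - \nabla$. Since $\deg_{f_i}G \geq 1$ whenever $G$ genuinely involves $f_i$, I expect the argument to split on whether $\deg_{f_i}G = 0$ (the first conclusion), and otherwise to squeeze the right-hand side.

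First I would dispose of the trivial case: if $\deg_{f_i}G = 0$ we are in the first alternative and there is nothing to prove. So assume $\deg_{f_i}G \geq 1$, which forces $q_i \geq 1$ or $r_i \geq 1$. The key observation is that $N_i \geq 0$: indeed by the estimate~(\ref{est}) one has $\nabla \geq 0$, and $N_i = s_i d_i - \nabla$; here I need $s_i d_i \geq \nabla$, which should follow because property~(\ref{para}) applied with $k = s_i$ (or the proof of Theorem~\ref{main}) already yields $\deg G \geq d_i \deg_{f_i}G - \nabla\lfloor \deg_{f_i}G / s_i\rfloor \geq 0$, so in particular taking $G$ with $\deg_{f_i}G = s_i$ gives $s_i d_i - \nabla \geq 0$, i.e.\ $N_i \geq 0$. (If $s_i = +\infty$ there is nothing to discuss: then $q_i = 0$, $r_i = \deg_{f_i}G \geq 1$, and $1 \geq r_i d_i \geq d_i$ forces $d_i = 1$.) Also $d_i \geq 1$ always and $r_i \geq 0$, so both terms $q_i N_i$ and $r_i d_i$ on the right are nonnegative.

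Now I would run the squeeze. Since $q_i N_i + r_i d_i \leq 1$ with both summands nonnegative integers, either the sum is $0$ or exactly one summand is $1$ and the other $0$. If $r_i \geq 1$ then $r_i d_i \geq d_i \geq 1$, forcing $r_i d_i = 1$, hence $d_i = 1$ — the second alternative — (and also $q_i N_i = 0$). If instead $r_i = 0$ then $\deg_{f_i}G = q_i s_i$ with $q_i \geq 1$ (as $\deg_{f_i}G \geq 1$), so $q_i N_i \leq 1$ gives $N_i \leq 1$ — the third alternative. Assembling: either $\deg_{f_i}G = 0$, or $d_i = 1$, or $N_i = s_i d_i - \nabla \leq 1$, as claimed.

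**The main obstacle** is not in the squeeze itself but in being careful about the two degenerate regimes: the case $s_i = +\infty$ (handled above by noting $q_i = 0$ and the remainder equals $\deg_{f_i}G$), and making sure $N_i \geq 0$ so that the inequality $q_i N_i + r_i d_i \le 1$ really does pin down the summands — without nonnegativity a large negative $N_i$ could absorb a large positive $r_i d_i$. Both points are quick once one invokes Theorem~\ref{main} (equivalently property~(\ref{para})) at $\deg_{f_i}G = s_i$ to get $N_i \geq 0$, so the whole corollary is genuinely a "straightforward computation" as the paper advertises.
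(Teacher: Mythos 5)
Your route is the paper's (apply Corollary \ref{cN} to $\deg G=1$ and squeeze $1\geq q_iN_i+r_id_i$), but it hinges on the assertion $N_i\geq 0$, and both this assertion and your justification of it are faulty. Applying Theorem \ref{main} to a polynomial $G$ with $\deg_{f_i}G=s_i$, say $G=f_i^{s_i}$, only yields $\deg G\geq s_id_i-\nabla$; since for that $G$ one has $\deg G=s_id_i$, all you recover is $\nabla\geq 0$, not $s_id_i-\nabla\geq 0$ (the displayed quantity $d_i\deg_{f_i}G-\nabla\lfloor\deg_{f_i}G/s_i\rfloor$ is bounded above by $\deg G$, not below by $0$). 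In fact $N_i$ can be negative: take $n=m=2$, $f_1=x_1+x_2^3$, $f_2=x_1+x_2+x_2^3$. These are algebraically independent (they even form an automorphism), $d_1=d_2=3$, $\jj_{x_1,x_2}(f_1,f_2)=1$, so $\nabla=3+3-2-0=4$; moreover $\gr(K[f_1])=K[x_2^3]$ and $\bar f_2=x_2^3\in K(x_2^3)$, so $s_2=1$ and $N_2=s_2d_2-\nabla=-1$. Consequently, in your branch $r_i\geq 1$ you are not entitled to drop the term $q_iN_i$ and conclude $r_id_i\leq 1$, hence $d_i=1$: that step, and the earlier claim that both summands are nonnegative, are unjustified. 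The corollary survives in that regime only because $N_i\leq 1$ then holds for free --- a case your proof never invokes. (Your treatment of $s_i=+\infty$ and of the branch $r_i=0$ is fine, since there either $q_iN_i=0$ by convention or $N_i\geq 2$ would force $q_iN_i\geq 2$.)

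The repair is one line and is exactly what the paper does, arguing by contraposition: if the conclusion fails then $\deg_{f_i}G\geq 1$, $d_i\geq 2$ and $N_i\geq 2$; in particular $N_i>0$, so both summands $q_iN_i$ and $r_id_i$ are nonnegative, and since $q_i\geq 1$ or $r_i\geq 1$, Corollary \ref{cN} gives $1=\deg G\geq q_iN_i+r_id_i\geq\min\{N_i,d_i\}\geq 2$, a contradiction. Equivalently, in your direct version, first dispose of the case $N_i\leq 1$ (third alternative, nothing to prove) and only then run the squeeze, where $N_i\geq 2$ restores the nonnegativity you need. With that adjustment your argument coincides with the paper's; as written, it contains a genuine gap resting on a false intermediate claim.
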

\begin{proof}
Otherwise, by corollary \ref{cN}, $\deg G=1\geq
q_iN_i+r_id_i\geq\min\{N_i,d_i\}\geq 2$, a contradiction.
\end{proof}
\begin{cor}\label{caut}
  Assume $m=n$ and $K[f_1,\cdots,f_n]=K[x_1,\cdots,x_n]$ i.e.
  $f_1,\cdots,f_n$ define an automorphism (well-known fact). Then
   $\forall i=1,\cdots,n$, $d_i=1$ or $s_id_i\leq d_1+\cdots+d_n-n+1$. In
  particular, if $d_{\max}\geq d_j,\forall j$, and $d_{\max}\geq 2$
  (i.e. the automorphism is not affine) then $s_{\max}\leq n-1$.
\end{cor}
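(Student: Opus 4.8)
The plan is to feed the coordinate functions $x_1,\dots,x_n$ into Corollary \ref{dG1} and simply read off the conclusion; the only preparatory work is to locate, for each $i$, a coordinate $x_j$ that genuinely depends on $f_i$. So first I would fix $i$ and claim there exists $j$ with $\deg_{f_i}x_j\ge 1$. Indeed, if every $x_j$ belonged to $K[f_1,\dots,f_{i-1},f_{i+1},\dots,f_n]$, then that subalgebra would be all of $K[x_1,\dots,x_n]$; but $K[x_1,\dots,x_n]$ has transcendence degree $n$ over $K$ whereas an algebra generated by the $n-1$ elements $f_1,\dots,f_{i-1},f_{i+1},\dots,f_n$ has transcendence degree at most $n-1$, a contradiction. (Here I use that, with $m=n$, the hypothesis $K[f_1,\dots,f_n]=K[x_1,\dots,x_n]$ forces the $f_i$ to be algebraically independent — the "well-known fact" cited in the statement — so that all earlier notions and results apply.)

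Next I would apply Corollary \ref{dG1} to the polynomial $G=x_j$, which has $\deg G=1$. Since $\deg_{f_i}x_j\neq 0$ by the previous step, the corollary leaves only the two alternatives $d_i=1$ or $N_i=s_id_i-\nabla\le 1$. In the second case I combine $s_id_i\le \nabla+1$ with the right-hand bound of the estimate (\ref{est}), namely $\nabla\le d_1+\cdots+d_n-n$, to obtain $s_id_i\le d_1+\cdots+d_n-n+1$. This is exactly the first assertion of the corollary.

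For the ``in particular'' part I would pick an index $i$ realizing $d_i=d_{\max}$; since $d_{\max}\ge 2$ we have $d_i\neq 1$, so the case just treated gives $s_{\max}d_{\max}\le d_1+\cdots+d_n-n+1$. Bounding each $d_j$ by $d_{\max}$ yields $s_{\max}d_{\max}\le n\,d_{\max}-(n-1)$, and because a non-affine automorphism requires $n\ge 2$ (every automorphism of $K[x_1]$ is affine), the quantity $n-1$ is strictly positive, whence $s_{\max}d_{\max}<n\,d_{\max}$ and therefore $s_{\max}\le n-1$ (in particular $s_{\max}$ is finite). I do not expect a real obstacle here: the two places needing a little attention are the transcendence-degree remark supplying the coordinate $x_j$ with $\deg_{f_i}x_j\ge 1$, and keeping the last inequality strict — which is precisely where the hypothesis $d_{\max}\ge 2$, equivalently $n\ge 2$, is spent in order to conclude $s_{\max}\le n-1$ rather than merely $s_{\max}\le n$.
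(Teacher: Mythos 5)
Your proof is correct and follows essentially the same route as the paper: for each $i$ you locate a coordinate $x_j$ with $\deg_{f_i}x_j\geq 1$ (the paper likewise argues that otherwise $K[f_1,\cdots,f_{i-1},f_{i+1},\cdots,f_n]$ would be all of $K[x_1,\cdots,x_n]$, which is impossible) and then apply Corollary \ref{dG1} to $G=x_j$, finishing with the bound on $\nabla$ and the choice $d_i=d_{\max}\geq 2$ forcing $n\geq 2$. The only cosmetic difference is that the paper computes $\nabla=d_1+\cdots+d_n-n$ exactly, using that the Jacobian of an automorphism is a nonzero constant, whereas you invoke only the general estimate (\ref{est}); both yield $s_id_i\leq\nabla+1\leq d_1+\cdots+d_n-n+1$.
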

\begin{proof}
One has $\nabla=\nabla(f_1,\cdots,f_n)=d_1+\cdots+d_n-n-\deg
  \jj_{x_{1},\cdots,x_{n}}(f_1,\cdots,f_n)=d_1+\cdots+d_n-n$.
  Moreover $\forall j=1,\cdots,n$, there exists $G_j\in
  K[f_1,\cdots,f_n]$ such that $x_j=G_j$ and, $\forall
  i=1,\cdots,n$, $\deg_{f_i}G_j\leq 1$ for at least one
  $j=1,\cdots,n$ otherwise
  $K[f_1,\cdots,f_{j-1},f_{j+1},\cdots,f_m]=K[x_1,\cdots,x_n]$ which
  is impossible. Whence, by corollary \ref{dG1}, $d_i=1$ or $s_id_i\leq\nabla+1=
  d_1+\cdots+d_n-n+1$. With $d_{\max}$ one gets
  $s_{\max}d_{\max}\leq d_1+\cdots+d_n-n+1\leq
  nd_{\max}-n+1\leq nd_{\max}-1$ ($n\geq 2$) and it follows that $s_{\max}\leq n-1$.
\end{proof}
\begin{cor}[Tameness Theorem in dimension two]
  Every automorphism of $K[x_1,x_2]$ is tame i.e. a product of
  affine and elementary ones. Recall that an automorphism $\tau:K[x_1,x_2]\rightarrow
  K[x_1,x_2]$ is called elementary when, up to exchanging $x_1$ and $x_2$, $\tau(x_1)=x_1+p(x_2)$ and
  $\tau(x_2)=x_2$ for some $p(X)\in K[X]$.
\end{cor}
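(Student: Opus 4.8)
The plan is to revive the classical Jung--van der Kulk degree-reduction argument, the new ingredient being Corollary~\ref{caut}. Fix an automorphism $\varphi$ of $K[x_1,x_2]$, set $f_i:=\varphi(x_i)$ and $d_i:=\deg f_i$; since $f_1,f_2$ are algebraically independent and $K[f_1,f_2]=K[x_1,x_2]$, the pair $(f_1,f_2)$ falls under Corollary~\ref{caut} with $m=n=2$. I would induct on $d_1+d_2$. As $d_1,d_2\geq 1$, the case $d_1+d_2=2$ is precisely the affine case, for which there is nothing to prove. So suppose $d_1+d_2\geq 3$. Composing $\varphi$ with the exchange of $x_1$ and $x_2$ (an affine automorphism, which changes nothing as far as tameness is concerned) I may assume $d_1=\max(d_1,d_2)\geq 2$; then Corollary~\ref{caut} yields $s_1\leq n-1=1$.

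The next step is to turn the bound $s_1\leq 1$ into structural information about leading forms. Since $s_1$ is the degree of the minimal polynomial of $\bar f_1$ over $\Frac(\gr(K[f_2]))=K(\bar f_2)$ and $\bar f_1\notin K$ (as $d_1\geq 1$), necessarily $s_1=1$, i.e. $\bar f_1\in K(\bar f_2)$; write $\bar f_1=A(\bar f_2)/B(\bar f_2)$ with $A,B$ coprime in $K[T]$. Clearing denominators inside $K[x_1,x_2]$ and looking at top homogeneous components, and using that $\bar f_2$ is homogeneous of positive degree $d_2$ (so that the homogeneous components of $A(\bar f_2)$ and of $B(\bar f_2)$ are their individual terms in $\bar f_2$), I expect to get $b\,\bar f_1\,\bar f_2^{\deg B}=a\,\bar f_2^{\deg A}$, where $a,b\in K$ are the nonzero leading coefficients of $A,B$; a degree count forces $d_2\mid d_1$, and cancelling a power of $\bar f_2$ gives $\bar f_1=\lambda\,\bar f_2^{k}$ with $\lambda\in K$ nonzero and $k:=d_1/d_2$ a positive integer. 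This small homogeneity computation is essentially the only thing that needs to be checked, and is the (mild) crux of the argument; everything else is the standard reduction plus bookkeeping of degrees.

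It then remains to reduce the degree. Let $\tau$ be the elementary automorphism given by $\tau(x_1)=x_1+\lambda x_2^{k}$ and $\tau(x_2)=x_2$. Then $\varphi':=\varphi\circ\tau^{-1}$ maps $x_1$ to $f_1-\lambda f_2^{k}$ and $x_2$ to $f_2$; by the choice of $\lambda$ and $k$ the leading forms of $f_1$ and of $\lambda f_2^{k}$ coincide, so $\deg(f_1-\lambda f_2^{k})<d_1$ and the degree sum of $\varphi'$ is strictly smaller than $d_1+d_2$. By the induction hypothesis $\varphi'$ is tame, hence $\varphi=\varphi'\circ\tau$ is a product of a tame automorphism and an elementary one, hence tame. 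This closes the induction and proves the corollary.
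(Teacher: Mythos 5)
Your proposal is correct and follows essentially the same route as the paper: induction on $d_1+d_2$, Corollary \ref{caut} forcing $s_{\max}=1$, the homogeneity argument showing the leading term of the top-degree generator is a scalar multiple of a power of the other one's leading term, and an elementary automorphism to drop the degree. The only difference is cosmetic (you normalize $d_1$ as the maximum and clear denominators explicitly instead of invoking relation (\ref{s2})), so there is nothing to fix.
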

\begin{proof}
  Let $\alpha:K[x_1,x_2]\rightarrow
  K[x_1,x_2]$ be an automorphism defined by $\alpha(x_i)=f_i$ for
  $i=1,2$. We prove the corollary by induction on $d_1+d_2=\deg
  f_1+\deg f_2$.\\
  If $d_1+d_2=2$ then $d_1=d_2=1$ and $\alpha$ is affine.\\
  Assume $d_1+d_2\geq 3$. Without loss of generality $d_1\leq d_2$
  and $d_2\geq 2$ whence, by corollary \ref{caut}, $s_2=1$ and the
  relation (\ref{s2}) in the proof of corollary \ref{cSU} becomes:
  $\bar f_2=p(\bar f_1)$ where $p(X)$ must be of the form
  $p(X)=p_{s_1}X^{s_1}\in K[X]$. Taking the elementary automorphism
  $\tau$ defined $\tau(x_1)=x_1$ and $\tau(x_2)=x_2-p(X)$ one has a
  new pair
  $f_1':=\alpha\tau(x_1)=\alpha(x_1)=f_1$ and
  $f_2':=\alpha\tau(x_2)=\alpha(x_2-p(X))=f_2-p(f_1)$ with degrees
  $d_1'=d_1$ and $d_2'< d_2$ hence $d_1'+d_2'<d_1+d_2$. By induction
  $\alpha\tau$ is tame and so is $\alpha$.
\end{proof}

\vspace{1em}
\begin{tabular}{lll}
St\'ephane V\'en\'ereau\\
Mathematisches Institut\\
Universit\"at Basel\\
Rheinsprung 21, CH-4051 Basel\\
Switzerland\\
stephane.venereau@unibas.ch\\
\end{tabular}
\end{document}